\def\BibTeX{{\rm B\kern-.05em{\sc i\kern-.025em b}\kern-.08em
    T\kern-.1667em\lower.7ex\hbox{E}\kern-.125emX}}
\newcommand{\calA}{\mathcal{A}}
\newcommand{\calB}{\mathcal{B}}
\newcommand{\calC}{\mathcal{C}}
\newcommand{\calH}{\mathcal{H}}
\newcommand{\calL}{\mathcal{L}}
\newcommand{\calQ}{\mathcal{Q}}
\newcommand{\calW}{\mathcal{W}}
\newcommand{\calX}{\mathcal{X}}
\newcommand{\calY}{\mathcal{Y}}
\newcommand{\calZ}{\mathcal{Z}}
\newcommand{\R}{\ensuremath\mathbb{R}}
\newcommand{\C}{\ensuremath\mathbb{C}}
\newcommand{\Ltwo}{L^2}
\newcommand{\GL}[2]{\mathrm{GL}_{#1}}
\newcommand{\spd}[1]{\mathcal{S}^{#1}_{\succ}}
\newcommand{\spsd}[1]{\mathcal{S}^{#1}_{\succeq}}
\newcommand{\ds}{\,\mathrm{d}s}
\newcommand{\dt}{\,\mathrm{d}t}
\newcommand{\ddt}{\ensuremath{\tfrac{\mathrm{d}}{\mathrm{d}t}}}
\DeclareMathOperator{\diag}{diag}
\DeclareMathOperator{\dom}{dom}
\DeclareMathOperator{\Kern}{ker}
\DeclareMathOperator{\image}{im}
\DeclareMathOperator{\rank}{rank}
\DeclareMathOperator{\Skew}{skew}
\DeclareMathOperator{\Sym}{sym}
\newcommand{\state}{x}
\newcommand{\stateDim}{n}
\newcommand{\inpVar}{u}
\newcommand{\inpVarDim}{m}
\newcommand{\outVar}{y}
\newcommand{\hamiltonian}{\calH}
\newcommand{\abstractState}{z}
\newcommand{\dstate}{\phi}
\newcommand{\delayHam}{\Theta}
\newcommand{\pH}{\textsf{pH}\xspace}
\newcommand{\KYP}{\textsf{KYP}\xspace}
\newcommand{\DAEs}{\textsf{DAEs}\xspace}
\newcommand{\DDE}{\textsf{DDE}\xspace}
\newcommand{\DDEs}{\textsf{DDEs}\xspace}
\theoremstyle{plain}
\newtheorem{theorem}{Theorem}[section]
\newtheorem{proposition}[theorem]{Proposition}
\newtheorem{lemma}[theorem]{Lemma}
\newtheorem{remark}[theorem]{Remark}
\newtheorem{definition}[theorem]{Definition}
\newtheorem{example}[theorem]{Example}
\begin{document}
\title{Towards a modeling class for port-Hamiltonian systems with time-delay}
\author{Tobias Breiten, Dorothea Hinsen, and Benjamin Unger
\thanks{T.~Breiten and D.~Hinsen are with the Department of Mathematics, Technische Universität Berlin, Berlin, Germany (e-mails: \{breiten,hinsen\}@math.tu-berlin.de).}
\thanks{B.~Unger is with the Stuttgart Center for Simulation Science (SimTech), University of Stuttgart, Germany (e-mail: benjamin.unger@simtech.uni-stuttgart.de).}
\thanks{T.~Breiten is partially supported by the BMBF (grant no.~05M22KTB) and the DFG within the CRC 154 (239904186). 
D. Hinsen acknowledges funding from the DFG within CRC 910.
B. Unger is partially supported by the BMBF (grant no.~05M22VSA), the DFG within EXC 2075 (390740016), and is thankful for support by the Stuttgart Center for Simulation Science (SimTech).}}

\maketitle

\begin{abstract}
	The framework of port-Hamiltonian (pH) systems is a powerful and broadly applicable modeling paradigm. In this paper, we extend the scope of pH systems to time-delay systems. Our definition of a delay pH system is motivated by investigating the Kalman-Yakubovich-Popov inequality on the corresponding infinite-dimensional operator equation. Moreover, we show that delay pH systems are passive and closed under interconnection. We describe an explicit way to construct a Lyapunov-Krasovskii functional and discuss implications for delayed feedback.
\end{abstract}

\begin{IEEEkeywords}
	Lyapunov-Krasovskii functional, port-Hamiltonian system, time-delay, Kalman-Yakubovich-Popov inequality
\end{IEEEkeywords}

\vspace{-2em}
\section{Introduction}
\label{sec:intro}

The \emph{port-Hamiltonian} (\pH) framework \cite{JacZ12,SchJ14} constitutes an innovative energy-based model paradigm that offers a systematic approach to the interaction of (physical) systems with each other and the environment via interconnection structures. The inherent structure of \pH systems encodes, among other advantages, control theoretical concepts such as passivity and stability, and facilitates structure-preserving approximation schemes \cite{Egg19,BreU22}. It applies to a large range of different applications and was recently extended to descriptor systems; see \cite{MehU22-ppt} and the references therein. Nevertheless, a consistent generalization to infinite-dimensional systems is not yet available. In particular, extending the \pH concept to \emph{delay differential-algebraic equations} (\DAEs) poses significant challenges with only limited contributions in the literature that primarily focus on the stability analysis of \pH systems with delayed feedback, cf.~\cite{Sun11,KaoP12,YanW13,AouLES14,SchFOR16,MatMN20}. 
A notable exception is provided in \cite{Kur22}, which, however, deals mainly with the solvability analysis. For an overview of \DDEs and their applications, we refer to \cite{Ern09,Ung20b}.

In this paper, we study how the \pH framework can be extended to linear time-invariant \emph{delay differential equations}~(\DDEs) of the form
\begin{equation}
	\label{eqn:delaySys}
	\left\{\ \begin{aligned}
	\dot{\state}(t) &= A_0 \state(t) + A_1\state(t-\tau) + B\inpVar(t), & t > 0,\\
	\outVar(t) &= C\state(t), & t>0,\\
	\state(t) &= \phi(t), & t\in[-\tau,0],
	\end{aligned}\right.
\end{equation}
where $A_0,A_1\in\R^{\stateDim\times\stateDim}$, $B,C^\top\in\R^{\stateDim\times\inpVarDim}$. In the non-delayed case, which is obtained if either $\tau=0$ or $A_1=0$ in~\eqref{eqn:delaySys}, and some further technical details, the system~\eqref{eqn:delaySys} has a \pH representation if and only if~\eqref{eqn:delaySys} is passive \cite{BeaMX22,CheGH22}. In this case, a solution of the associated \emph{Kalman-Yakubovich-Popov} (\KYP) inequality can then be used to construct the explicit \pH representation; see \cref{subsec:pH} for further details. We follow this idea by recasting the \DDE~\eqref{eqn:delaySys} as an infinite-dimensional linear system without delay \cite{CurZ20}, for which we study special solutions of the corresponding operator \KYP inequality. The \pH structure on the operator level is then recast as a \DDE, which forms the basis for our definition of a time-delay \pH system. Our main results are the following:
\begin{enumerate}
	\item We present a novel definition of a time-delay \pH system in \Cref{def:pHDelay} and prove that time-delay \pH systems are passive and closed under interconnection, see \Cref{lem:dissipationInequality,lem:interconnection}.
	\item The condition for a \DDE to be \pH generalizes a well-known matrix inequality for delay-independent passivity (cf.~\Cref{prop:delayPHinequality}) and details the construction of the necessary Lyapunov-Krasovskii functional in \Cref{lem:pHdelayCondition:RZrelation} and \Cref{thm:LyapKravoskiiConstruction}. Implications for time-delayed feedback are discussed in \cref{subsec:delayedFeedback}.
\end{enumerate}

We review the necessary definitions and preliminary results in \cref{sec:preliminaries} and present our main results in \cref{sec:timeDelayPH}.

\subsection*{Notation} 
The sets of nonsingular matrices, symmetric positive-definite, and symmetric positive semi-definite matrices of dimension $\stateDim$ are denoted by $\GL{\stateDim}{\R}$, $\spd{\stateDim}$, and $\spsd{\stateDim}$, respectively. Moreover, for a matrix $F\in\R^{\stateDim\times\stateDim}$, we use the notation $\Sym(F)\vcentcolon= \tfrac{1}{2}(F+F^\top)$ and $\Skew(F)\vcentcolon= \tfrac{1}{2}(F-F^\top)$.
The set of all linear and bounded operators from a Banach space $\calX$ to a Banach space $\calY$ is denoted by $\calL(\calX,\calY)$. If $\calX = \calY$, we simply write $\calL(\calX)$.

\section{Preliminaries}
\label{sec:preliminaries}

In this section, we recall important results on \pH systems and time-delay systems that we will later leverage to motivate the definition of a time-delay \pH system.

\subsection{Passive and port-Hamiltonian systems}
\label{subsec:pH}

We consider a linear time-invariant system of the form
\begin{equation}
	\label{eqn:LTIsys}
	\Sigma\quad \left\{\quad\begin{aligned}
		\dot{\state}(t) &= A \state(t) + B\inpVar(t),\\
		\outVar(t) &= C\state(t),
	\end{aligned}\right.
\end{equation}
with $\inpVar\colon \R \to \R^\inpVarDim$, $\state\colon \R \to \R^\stateDim$, $\outVar \colon \R \to \R^\inpVarDim$ are the \emph{input}, \emph{state}, and \emph{output} of the system and matrices $A\in\R^{\stateDim\times\stateDim}$, $B\in\R^{\stateDim\times\inpVarDim}$, and $C\in\R^{\inpVarDim\times\stateDim}$. Throughout this subsection,  we assume that~\eqref{eqn:LTIsys} is \emph{minimal}, i.e., for all $s\in\C$ the conditions
\begin{align*}
	\rank\begin{bmatrix}
		sI_n-A, & B
	\end{bmatrix} = n =
	\rank\begin{bmatrix}
		sI_n-A^\top, & C^\top
	\end{bmatrix}
\end{align*}
are satisfied. 
For convenience, we introduce for $H\in\R^{\stateDim\times\stateDim}$ the notation
\begin{equation}
	\label{eqn:pHMatrixDef}
	\Sigma_H \vcentcolon= \begin{bmatrix}
		HA & HB\\
		-C & 0
	\end{bmatrix}\in\R^{(\stateDim+\inpVarDim) \times (\stateDim+\inpVarDim)}.
\end{equation}
Following \cite{Wil72b,MehU22-ppt}, we give the following definitions.

\begin{definition}
	\label{def:PassPH}
	We consider system~\eqref{eqn:LTIsys}.
	\begin{enumerate}
		\item System~\eqref{eqn:LTIsys} is called \emph{passive}, if there exists a state-dependent \emph{storage function} $\hamiltonian\colon \R^\stateDim\to \R_+$ satisfying for any $t_1\geq t_0$ the dissipation inequality
			\begin{equation}
				\label{eqn:dissipationInequality}
				\hamiltonian(\state(t_1)) - \hamiltonian(\state(t_0)) \leq \int_{t_0}^{t_1} \outVar(t)^\top \inpVar(t)\dt
			\end{equation}
			for trajectories $(\inpVar,\state,\outVar)$ satisfying~\eqref{eqn:LTIsys}. 
		\item System~\eqref{eqn:LTIsys} is called \emph{port-Hamiltonian} (\pH), if there exists $H = H^\top \in \spd{\stateDim}$ such that the \emph{dissipativity condition}
			\begin{equation}
				\label{eqn:dissipativityCondition}
				\Sym\left(\Sigma_H\right) \leq 0.
			\end{equation}	
			is satisfied. In this case, we define the matrices
			\begin{equation*}
				\label{eqn:pH:matrixDefinitions}
				\begin{bmatrix}
					R & 0\\
					0 & 0
				\end{bmatrix} \vcentcolon= -\Sym(\Sigma_H),\  
				\begin{bmatrix}
					J & G\\
					-G^\top & 0
				\end{bmatrix} \vcentcolon= \Skew(\Sigma_H),
			\end{equation*}								
			and call
			\begin{equation}
				\label{eqn:pH}
				\left\{~\begin{aligned}
		H\dot{\state}(t) &= (J-R)\state(t) + G\inpVar(t),\\
		\outVar(t) &= G^\top \state(t)
	\end{aligned}\right.
			\end{equation}
			a (generalized state-space) \emph{\pH representation} of~\eqref{eqn:LTIsys} with associated Hamiltonian
			\begin{equation}
				\label{eqn:pH:Hamiltonian}
				\hamiltonian(\state) \vcentcolon= \tfrac{1}{2} \state^\top H\state.
			\end{equation}
	\end{enumerate}
\end{definition}

Straightforward calculations show that a \pH system is passive with the Hamiltonian acting as storage function.
Conversely, assume that~\eqref{eqn:LTIsys} is passive, minimal, and stable. Define $\mathcal{W}\colon \R^{\stateDim\times \stateDim} \to \R^{(\stateDim+\inpVarDim)\times (\stateDim+\inpVarDim)}$ via
\begin{equation}
	\label{eqn:KYPmatrix}
	\calW(H) \vcentcolon= \begin{bmatrix}
		-A^\top H - HA & C^\top - HB\\
		C - B^\top H & 0
	\end{bmatrix}.
\end{equation}
It is well-known, see for instance \cite{Wil72b}, that the \KYP inequality
\begin{equation}
	\label{eqn:KYP}
	\calW(H) \geq 0
\end{equation}
has a solution $H\in\spd{\stateDim}$. We immediately notice that~\eqref{eqn:KYP} resembles the  dissipativity condition~\eqref{eqn:dissipativityCondition}. Thus, we have proven the following equivalence, which is already well-known in the literature, see for instance \cite{BeaMX22}.

\begin{theorem}
	\label{thm:equivalence}
	Assume that~\eqref{eqn:LTIsys} is minimal and stable. Then the following are equivalent:
	\begin{enumerate}
		\item The system~\eqref{eqn:LTIsys} is passive.
		\item The system~\eqref{eqn:LTIsys} is port-Hamiltonian.
	\end{enumerate}
\end{theorem}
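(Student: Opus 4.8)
The plan is to prove the two implications separately and to connect them through the algebraic identity relating the dissipativity condition~\eqref{eqn:dissipativityCondition} and the \KYP inequality~\eqref{eqn:KYP}.

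For the direction (2)$\Rightarrow$(1) I would start from a \pH representation~\eqref{eqn:pH} with $H\in\spd{\stateDim}$ and take the storage candidate $\hamiltonian(\state)=\tfrac12\state^\top H\state$ from~\eqref{eqn:pH:Hamiltonian}, which is nonnegative since $H$ is positive definite. Differentiating along a trajectory and using $H=H^\top$ gives $\ddt\hamiltonian(\state(t))=\state^\top H\dot{\state}=\state^\top(J-R)\state+\state^\top G\inpVar$. As $J$ is skew-symmetric, the term $\state^\top J\state$ drops out, and since $\outVar=G^\top\state$ this simplifies to $\ddt\hamiltonian(\state(t))=-\state^\top R\state+\outVar^\top\inpVar\le\outVar^\top\inpVar$, where the inequality uses $R\ge 0$. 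Integrating from $t_0$ to $t_1$ then yields exactly the dissipation inequality~\eqref{eqn:dissipationInequality}, so the system is passive.

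For the converse (1)$\Rightarrow$(2) the crucial observation is purely algebraic: writing out $\Sigma_H$ from~\eqref{eqn:pHMatrixDef} and using $H=H^\top$, one checks that $\calW(H)=-(\Sigma_H+\Sigma_H^\top)=-2\Sym(\Sigma_H)$. Consequently $\calW(H)\ge 0$ holds if and only if $\Sym(\Sigma_H)\le 0$, i.e.\ if and only if the dissipativity condition~\eqref{eqn:dissipativityCondition} is satisfied for the same $H$. It therefore suffices to exhibit a positive-definite solution of the \KYP inequality~\eqref{eqn:KYP}, which is guaranteed for a passive, minimal, and stable system by the classical \KYP lemma recalled above (cf.~\cite{Wil72b}); feeding this $H$ into the identity produces $\Sym(\Sigma_H)\le 0$, so~\eqref{eqn:LTIsys} is \pH.

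The only genuinely nontrivial ingredient is the existence statement invoked in the second implication: everything else collapses to the skew-symmetry of $J$, the sign condition $R\ge 0$, and the matrix identity $\calW(H)=-2\Sym(\Sigma_H)$. I expect the main obstacle to be the guarantee that the storage matrix $H$ can be chosen \emph{positive definite} rather than merely positive semi-definite---this is precisely where the minimality and stability hypotheses enter, and it is the content of the \KYP lemma that I would cite rather than reprove.
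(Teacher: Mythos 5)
Your proposal is correct and takes essentially the same route as the paper: the direction \pH $\Rightarrow$ passive is the same straightforward energy-balance computation with the Hamiltonian~\eqref{eqn:pH:Hamiltonian} as storage function, and the converse cites the classical \KYP lemma \cite{Wil72b} for a positive-definite solution of~\eqref{eqn:KYP} and identifies it with the dissipativity condition~\eqref{eqn:dissipativityCondition}. Your only addition is making the paper's remark that \eqref{eqn:KYP} ``resembles'' \eqref{eqn:dissipativityCondition} precise via the identity $\calW(H) = -2\Sym(\Sigma_H)$, which is exactly the intended observation.
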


\begin{remark}
	\label{rem:generalPHformat}
	In the literature, see for instance \cite{DuiMSB09,SchJ14,MehU22-ppt}, \pH systems are often modelled in a slightly different form. Such a different formulation can be achieved via a factorization of the Hessian of the Hamiltonian of the form $H = E^\top Q\in\spsd{\stateDim}$ with matrices $E,Q\in\R^{\stateDim\times\stateDim}$. Then, a system of the form  
	\begin{equation}
		\label{eqn:pHdescriptor}
			\left\{~\begin{aligned}
			E\dot{\state}(t) &= (J-R)Q\state(t) + G\inpVar(t),\\
			\outVar(t) &= G^\top Q\state(t)
	\end{aligned}\right.
	\end{equation}
	is called a \pH system, if $J=-J^\top$ and $Q^\top RQ \in\spsd{\stateDim}$.
	Depending on the modelling, the factorization is often chosen as $(E,Q) = (I_\stateDim,H)$ or $(E,Q) = (H,I_{\stateDim})$ (as we do in \Cref{def:PassPH}).
\end{remark}

\subsection{Time-delay systems as abstract infinite-dimensional systems}
\label{subsec:timeDelay:operator}
To recast the delay equation~\eqref{eqn:delaySys} as abstract operator differential equation, we follow \cite[Sec.~3.3]{CurZ20} and define the Hilbert space $\calZ_{\stateDim;\tau}\vcentcolon= \R^\stateDim \times \Ltwo([-\tau,0]; \R^\stateDim)$ with standard inner product
\begin{equation*}
	\left\langle \begin{bmatrix}
		\state_1\\ \dstate_1
	\end{bmatrix}, \begin{bmatrix}
		\state_2\\ \dstate_2
\end{bmatrix}\right\rangle_{\!\!\calZ_{\stateDim;\tau}} \!\!\vcentcolon= \langle \state_1,\state_2 \rangle_{\R^\stateDim} + \langle \dstate_1, \dstate_2 \rangle_{\Ltwo}.
\end{equation*}
We define the linear operator $\calA\colon\dom(\calA)\subseteq\calZ_{\stateDim;\tau} \to\calZ_{\stateDim;\tau}$, $\calB\colon\R^{\inpVarDim}\to\calZ_{\stateDim;\tau}$, and $\calC\colon\calZ_{\stateDim;\tau}\to\R^{m}$ via
\begin{align}
	\calA\begin{bmatrix}
		\state\\\phi
	\end{bmatrix} \vcentcolon= \begin{bmatrix}
		A_0 \state + A_1 \dstate(-\tau)\\
		\tfrac{\mathrm{d}}{\mathrm{d}s}\dstate
	\end{bmatrix},\ 
	\calB\inpVar &\vcentcolon= \begin{bmatrix}
		B\inpVar\\
		0
	\end{bmatrix},\ 
	\calC\begin{bmatrix}
		\state\\\dstate
	\end{bmatrix} \vcentcolon= C\state
\end{align}
with domain
\begin{align*}
	\resizebox{\linewidth}{!}{$
	\dom(\calA) \vcentcolon= \left\{ \begin{bmatrix}
		\state\\\dstate
	\end{bmatrix} \in \calZ_{\stateDim;\tau} \,\,\left|\,\, \begin{aligned} & \dstate \text{ is absolutely continuous,}\\
	&\tfrac{\mathrm{d}}{\mathrm{d}s} \dstate \in L_2([-\tau;0]; \R^\stateDim), \text{ and } \dstate(0)=\state\end{aligned} \right.\right\}.$}
\end{align*}
With these preparations, we can study the operator equation
\begin{equation}
	\label{eqn:delaySys:operatorForm}
	\left\{\quad\begin{aligned}
	\dot{\abstractState}(t) &= \calA \abstractState(t) + \calB \inpVar(t), & t>0,\\
	\outVar(t) &= \calC \abstractState(t), & t>0,\\
	\abstractState(0) &= \abstractState_0,
	\end{aligned}\right.
\end{equation}
and it is well-known from the literature, that we can study~\eqref{eqn:delaySys:operatorForm} instead of~\eqref{eqn:delaySys}.

\subsection{Passive time-delay systems}
\label{subsec:timeDelay:passive}

Delay-independent passivity is typically studied via a Lyapunov-Krasovskii type functional of the form
\begin{align}
	\label{eqn:LyapunovKrasovskiiFunctional}
	\hamiltonian(\state\big|_{[t-\tau,t]}) = \state(t)^\top Q\state(t) + \int_{t-\tau}^t \state(s)^\top \delayHam\state(s)\ds,
\end{align}
denotes the classical function segment used in the delay literature; cf.~\cref{subsec:timeDelay:operator}. The following result, taken from \cite[Lem.~1]{NicL01}, provides a sufficient condition for passivity of~\eqref{eqn:delaySys}. Note that we present the result with a non-strict inequality, which does not influence the line of reasoning within the proof of \cite[Lem.~1]{NicL01}.

\begin{lemma}
	\label{lem:passiveTimeDelay}
	Consider the delay equation~\eqref{eqn:delaySys}. If there exist positive definite matrices $Q,\delayHam\in\spd{\stateDim}$ such that
	\begin{subequations}
		\begin{align}
		&A_0^\top Q + Q A_0 + Q A_1 \delayHam^{-1} A_1^\top Q + \delayHam \leq 0, \label{eqn:NicL01inequality}\\
		&C = B^\top Q,
	\end{align} 
	\end{subequations}
	then the Lyapunov-Krasovskii function~\eqref{eqn:LyapunovKrasovskiiFunctional} satisfies the dissipation inequality
	\begin{align}
		\label{eqn:NicL01passivityinequ}
		\tfrac{1}{2}\hamiltonian(\state\big|_{[t_1-\tau,t_1]}) - \tfrac{1}{2}\hamiltonian(\state\big|_{[t_0-\tau,t_0]}) \leq \int_{t_0}^{t_1} \outVar(t)^\top \inpVar(t)\dt
	\end{align}
	and hence, the delay equation~\eqref{eqn:delaySys} is passive.
\end{lemma}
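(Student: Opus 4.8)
The plan is to differentiate the functional \eqref{eqn:LyapunovKrasovskiiFunctional} along solution trajectories of \eqref{eqn:delaySys}, show that the resulting time derivative is pointwise dominated by $\outVar(t)^\top\inpVar(t)$, and then integrate this estimate over $[t_0,t_1]$ to obtain \eqref{eqn:NicL01passivityinequ}.

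First, using $Q=Q^\top$ for the quadratic term and the Leibniz rule for the integral term, I would compute
\begin{equation*}
	\ddt\,\tfrac{1}{2}\hamiltonian(\state\big|_{[t-\tau,t]}) = \state(t)^\top Q\dot{\state}(t) + \tfrac{1}{2}\state(t)^\top\delayHam\state(t) - \tfrac{1}{2}\state(t-\tau)^\top\delayHam\state(t-\tau).
\end{equation*}
Substituting the dynamics $\dot{\state}(t)=A_0\state(t)+A_1\state(t-\tau)+B\inpVar(t)$ and invoking the output relation $C=B^\top Q$ (equivalently $QB=C^\top$, so that $\state(t)^\top QB\inpVar(t)=(C\state(t))^\top\inpVar(t)=\outVar(t)^\top\inpVar(t)$) isolates the supply-rate term $\outVar(t)^\top\inpVar(t)$. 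The remaining contributions are purely quadratic in $\state(t)$ and $\state(t-\tau)$.

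The key step is to recognize these remaining terms as a single quadratic form. Symmetrizing $\state(t)^\top QA_0\state(t)=\tfrac{1}{2}\state(t)^\top(A_0^\top Q+QA_0)\state(t)$, they read
\begin{equation*}
	\tfrac{1}{2}\begin{bmatrix}\state(t)\\\state(t-\tau)\end{bmatrix}^{\!\top} \begin{bmatrix} A_0^\top Q+QA_0+\delayHam & QA_1\\ A_1^\top Q & -\delayHam\end{bmatrix}\!\begin{bmatrix}\state(t)\\\state(t-\tau)\end{bmatrix}.
\end{equation*}
Since $\delayHam\in\spd{\stateDim}$ is invertible, a Schur-complement argument with respect to the $(2,2)$-block $-\delayHam\prec0$ shows that this block matrix is negative semi-definite if and only if
\begin{equation*}
	A_0^\top Q + QA_0 + \delayHam + QA_1\delayHam^{-1}A_1^\top Q \leq 0,
\end{equation*}
which is exactly the standing hypothesis \eqref{eqn:NicL01inequality}. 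Consequently $\ddt\,\tfrac{1}{2}\hamiltonian(\state\big|_{[t-\tau,t]})\leq\outVar(t)^\top\inpVar(t)$ almost everywhere along trajectories.

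Integrating this pointwise bound from $t_0$ to $t_1$ yields \eqref{eqn:NicL01passivityinequ}, and passivity follows because $Q,\delayHam\succ0$ render the functional \eqref{eqn:LyapunovKrasovskiiFunctional} a nonnegative storage functional. I expect the main obstacle to be the completion-of-squares / Schur-complement bookkeeping that absorbs the indefinite cross term $\state(t)^\top QA_1\state(t-\tau)$ into the curvature term $QA_1\delayHam^{-1}A_1^\top Q$ of \eqref{eqn:NicL01inequality}; the differentiation and integration are otherwise routine, modulo the technical point that the chain rule applies only along absolutely continuous solutions, so one integrates an almost-everywhere inequality.
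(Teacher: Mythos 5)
Your proof is correct and takes the same route as the paper's source for this result: the paper does not reprove the lemma but defers to \cite[Lem.~1]{NicL01}, whose argument is precisely yours --- differentiate the functional along trajectories, use $C=B^\top Q$ to extract the supply rate $\outVar(t)^\top\inpVar(t)$, collect the remaining terms into the block quadratic form in $(\state(t),\state(t-\tau))$, and conclude negative semi-definiteness via the Schur complement with respect to the $-\delayHam$ block, which is exactly \eqref{eqn:NicL01inequality}. Your bookkeeping (the factor $\tfrac{1}{2}$, the unsymmetrized cross term $Q A_1$, and the integration of the almost-everywhere inequality) is all consistent, and it also mirrors the quadratic-form argument the paper itself uses in the proof of \Cref{lem:dissipationInequality}.
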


\section{Time-delay port-Hamiltonian systems}
\label{sec:timeDelayPH}

Leveraging the previous section's discussion, we take the following route to obtain a meaningful definition of a time-delay \pH system. First, we use the operator formulation~\eqref{eqn:delaySys:operatorForm} of the time-delay system and assume that the associated operator \KYP inequality has a (special) solution. Testing the operator \KYP inequality with suitable test functions, we obtain a finite-dimensional passivity condition that we can use to formulate a time-delay \pH system. The details are presented in \cref{subsec:timeDelayPH:def}.

\subsection{Motivation and definition}
\label{subsec:timeDelayPH:def}

We briefly recall the concepts of passivity of infinite-dimensional linear systems for which we closely follow the exposition from \cite[Sec.~7.5]{CurZ20}.
Given a self-adjoint, nonnegative operator $\calQ\in \calL(\calZ_{\stateDim;\tau})$, we call the system \eqref{eqn:delaySys:operatorForm} \emph{impedance passive} with respect to the \emph{supply rate} $s(\inpVar,\outVar) \vcentcolon= \langle \outVar,\inpVar\rangle + \langle \inpVar,\outVar\rangle$ 
and the \emph{storage function} $\hamiltonian(\abstractState)=\langle \abstractState,\calQ \abstractState\rangle$ if for all $t>0, \abstractState_0 \in \calZ_{\stateDim;\tau}$ and $\inpVar\in L^2([0,\tau];\R^\inpVarDim)$ it holds that
\begin{align*}
 \hamiltonian(\abstractState(t))\le \hamiltonian(\abstractState_0)+\int_0^t s(\inpVar(\theta),\outVar(\theta))\,\mathrm{d}\theta.
\end{align*}

From \cite[Thm.~7.5.3]{CurZ20} it follows that \eqref{eqn:delaySys:operatorForm} is impedance passive if and only if for $\inpVar \in \R^\inpVarDim$ and $\abstractState \in \dom(\calA)$ the following linear operator inequality is satisfied:
\begin{multline}\label{eqn:passive_lin_op_in}
 \langle \calA\abstractState,\calQ\abstractState \rangle +\langle \calQ\abstractState,\calA\abstractState\rangle\\
 + \left\langle \begin{bmatrix} \abstractState \\ \inpVar \end{bmatrix} , \begin{bmatrix} 0& \calQ\calB-\calC^* \\ \calB^*\calQ - \calC & 0 \end{bmatrix} \begin{bmatrix} \abstractState \\ \inpVar\end{bmatrix} \right\rangle \le 0.
\end{multline}
Note that if $\calQ$ maps from $\dom(\calA)$ into $\dom(\calA^*)$ instead of \eqref{eqn:delaySys:operatorForm} we may introduce
\begin{equation}
	\calQ_{\mathrm{ext}} \vcentcolon= \begin{bmatrix}
		\calA^* \calQ + \calQ\calA & \calQ\calB - \calC^*\\
		\calB^*\calQ - \calC & 0
	\end{bmatrix},
\end{equation}
and consider a KYP-type inequality (cf.~\cite[Lem~7.5.4]{CurZ20})
\begin{equation}
	\label{eqn:KYP:operator}
	\left\langle\begin{bmatrix}
		\abstractState\\
		\inpVar
	\end{bmatrix}, \calQ_{\mathrm{ext}} \begin{bmatrix}
		\abstractState\\\inpVar
	\end{bmatrix}\right\rangle \leq 0
\end{equation}
for any $\abstractState\in \dom(\calA)$ and $\inpVar\in\R^{\inpVarDim}$. While this is possible if~\eqref{eqn:passive_lin_op_in} holds with equality, in the following we focus on the inequality~\eqref{eqn:passive_lin_op_in} and do not assume $\calQ$ to map from $\dom(\calA)$ into $\dom(\calA^*)$.

Since $\calQ\in \calL(\calZ_{\stateDim;\tau})$, let us consider the following partitioning 
\begin{equation}
	\label{eqn:formofQcomp}
	\calQ = \begin{bmatrix}
		Q_{11} & \calQ_{12}\\
		\calQ_{12}^* & \calQ_{22}
	\end{bmatrix}, \qquad \begin{aligned}
		Q_{11}&\in\spsd{\stateDim},\\
		\calQ_{12}&\in \calL(L^2([-\tau,0];\R^\stateDim),\R^\stateDim),\\
		\calQ_{22}&\in \calL(L^2([-\tau,0];\R^\stateDim)).
	\end{aligned}
\end{equation}
For $\inpVar=0$ and $\abstractState=(\state,\dstate)\in\dom(\calA)$ with $\dstate(0)=\state$ we obtain from \eqref{eqn:passive_lin_op_in} 
\begin{align*}
		0 &\geq\left\langle \calA \begin{bmatrix}
			\state\\ \dstate
		\end{bmatrix}, \calQ \begin{bmatrix}
		\state\\ \dstate
	\end{bmatrix}\right\rangle + \left\langle \calQ \begin{bmatrix}
	\state\\ \dstate
\end{bmatrix}, \calA \begin{bmatrix}
\state\\ \dstate
\end{bmatrix}\right\rangle\\
 &=	\left\langle \begin{bmatrix}
		A_0 \state + A_1 \dstate(-\tau)\\
		\tfrac{\mathrm{d}\dstate}{\mathrm{d}s}
	\end{bmatrix}, \begin{bmatrix}
	Q_{11}\state +\calQ_{12}\dstate\\
	\calQ_{12}^*\state + \calQ_{22}\dstate
\end{bmatrix} \right\rangle\\
&\phantom{=}\quad
+ \left\langle \begin{bmatrix}	
	Q_{11}\state +\calQ_{12}\dstate\\
	\calQ_{12}^*\state + \calQ_{22}\dstate
\end{bmatrix}, \begin{bmatrix}
A_0 \state + A_1 \dstate(-\tau)\\
\tfrac{\mathrm{d}\dstate}{\mathrm{d}s}
\end{bmatrix} \right\rangle.
\end{align*}
To mimic the Lyapunov-Krasovskii functional~\eqref{eqn:LyapunovKrasovskiiFunctional} let us assume $\calQ_{12}=0$ to obtain
\begin{align*}
	0 &\geq (A_0 \state + A_1\dstate(-\tau))^\top Q_{11}\state + \state^\top Q_{11}(A_0 \state + A_1\dstate(-\tau))\\
	&\quad + \left\langle \tfrac{\mathrm{d}\dstate}{\mathrm{d}s} , \calQ_{22}\dstate \right\rangle + \left\langle  \calQ_{22}\dstate, \tfrac{\mathrm{d}\dstate}{\mathrm{d}s} \right\rangle\\
	&= (A_0 \state + A_1\dstate(-\tau))^\top Q_{11}\state + \state^\top Q_{11}(A_0 \state + A_1\dstate(-\tau)) \\
	&\quad + \left\langle \tfrac{\mathrm{d}\dstate}{\mathrm{d}s} , \calQ_{22}\dstate \right\rangle - \left\langle \tfrac{\mathrm{d}}{\mathrm{d}s}\calQ_{22}\dstate,\dstate \right\rangle + \left\langle \calQ_{22}\dstate, \dstate \right\rangle \big|_{-\tau}^{0}.
\end{align*}
Let us further assume that $\calQ_{22}$ is a multiplication operator induced by $Q_{22}\in\spsd{\stateDim}$ such that $g=\calQ_{22}\psi$ with $\psi\in L^2([-\tau,0];\R^{\stateDim})$ is defined by $g(s)=Q_{22}\psi(s)$ for a.\,e.~$s\in [-\tau,0]$. Consequently, we find 
\begin{align*}
\left\langle \calQ_{22}\dstate, \dstate \right\rangle \big|_{-\tau}^{0}	= \state^\top Q_{22} \state - \dstate(-\tau)^\top Q_{22} \dstate(-\tau)
\end{align*}
and
\begin{multline*}
	0 \geq \state^\top(A_0^\top Q_{11}+ Q_{11}A_0 + Q_{22})\state\\
	 + \dstate(-\tau)^\top A_1^\top Q_{11}\state + \state^\top Q_{11}A_1 \dstate(-\tau)\\
	  - \dstate(-\tau)^\top Q_{22}\dstate(-\tau).
\end{multline*}
For arbitrary $\state,\xi\in\R^{\stateDim}$ let us define $\dstate\colon [-\tau,0]\to \R^{\stateDim}$ by $\dstate(s)=\tfrac{s}{\tau}(\state-\xi)+\state$. Then $(\state,\dstate)\in \dom(\calA)$ and the last equation implies 
\begin{align}\label{eqn:kyp_delay}
 \begin{bmatrix} \state \\ \xi \end{bmatrix}^\top \begin{bmatrix} -A_0^\top Q_{11} -Q_{11}A_0 - Q_{22} & -Q_{11} A_1 \\ -A_1^\top Q_{11} & Q_{22} \end{bmatrix} \begin{bmatrix} \state \\ \xi\end{bmatrix} \ge 0.
\end{align}
Note further that from \eqref{eqn:passive_lin_op_in} we also obtain $\calC=\calB^*\calQ$ which implies $C=B^\top Q_{11}$. Moreover, observe that the above simplifications correspond to the storage function
\begin{align*}
	\hamiltonian(\state,\dstate)=\state^\top Q_{11} \state + \int_{-\tau}^0 \dstate(s)^\top Q_{22} \dstate(s)\,\mathrm{d}s.
\end{align*}

Similar to the non-delay case, instead of an explicit representation of $Q_{11}$ and $Q_{22}$ in \eqref{eqn:kyp_delay}, we may consider a generalized state-space representation leading to the following notion of a \pH delay system.
\begin{definition}
	\label{def:pHDelay}
	A time-delay system of the form
	\begin{subequations}
		\label{eqn:pHDelay}
		\begin{equation}
			\left\{\quad\begin{aligned}
				H\dot{\state}(t) &= (J-R)\state(t) - Z\state(t-\tau) + G\inpVar(t),\\
				\outVar(t) &= G^\top \state(t)
			\end{aligned}\right.
		\end{equation}
	with Hamiltonian
	\begin{equation}
		\label{eqn:pHDelay:Hamiltonian}
		\hamiltonian(\state\big|_{[t-\tau,t]}) = \tfrac{1}{2} \state(t)^\top H \state(t) + \int_{t-\tau}^t \state(s)^\top \delayHam \state(s)\ds
	\end{equation}
	\end{subequations}
	is called a \emph{port-Hamiltonian (\pH) delay system}, if $H\in\spd{\stateDim}$, $\delayHam\in\spsd{\stateDim}$, $J = -J^\top$ and
	\begin{equation}
		\label{eqn:pHcondition}
		\begin{bmatrix}
			R-\delayHam &  \tfrac{1}{2}Z\\
			\tfrac{1}{2}Z^\top & \delayHam
		\end{bmatrix}\in \spsd{2n}.
	\end{equation}
\end{definition}

To demonstrate \Cref{def:PassPH}, we consider the following example taken from \cite{NicL01}. 
\begin{example} 
	For parameters $\alpha_0,\alpha_1\in\R$ and time-delay $\tau>0$ we consider the scalar delay differential equation
	\begin{equation}
		\label{eqn:expl}
		\begin{aligned}
			\dot{\state}(t) &=-\alpha_0 \state(t) - \alpha_1 \state(t- \tau) + \inpVar(t)\\
			\outVar(t) &= \state(t)
		\end{aligned}
	\end{equation}
	with appropriate initial condition defined on $[-\tau,0]$. 
	We set $H \vcentcolon= 1$, $J\vcentcolon=0$, $R \vcentcolon= \alpha_0$, $Z\vcentcolon= \alpha_1$, $G\vcentcolon=1$ and $\delayHam \vcentcolon= \theta$ with $\theta \geq 0$. The matrix in~\eqref{eqn:pHcondition} has the eigenvalues 
	\begin{align*}
		\lambda_{1}, \lambda_2 &= \tfrac{\alpha_0}{2} \pm \sqrt{\tfrac{\alpha_0^2}{4}+ \theta^2 - \alpha_0 \theta + \tfrac{\alpha_1^2}{4}},
	\end{align*}
	which are non-negative if and only if $\theta \alpha_0 \geq \theta^2 + \tfrac{1}{4}\alpha_1^2$.
	This immediately implies $\alpha_0 \geq |\alpha_1|\geq 0$. In particular, we conclude that for any 
	\begin{align*}
		\theta \in \left[\tfrac{\alpha_0}{2}-\sqrt{\tfrac{\alpha_0^2}{4}-\tfrac{\alpha_1^2}{4}}, \tfrac{\alpha_0}{2}+\sqrt{\tfrac{\alpha_0^2}{4}-\tfrac{\alpha_1^2}{4}}\right]
	\end{align*}
	condition~\eqref{eqn:pHcondition} is satisfied and thus~\eqref{eqn:expl} is a \pH delay system.
\end{example}

\subsection{Properties}

\begin{lemma}[Passivity]
	\label{lem:dissipationInequality}
	Consider the \pH delay system~\eqref{eqn:pHDelay}. Then
	\begin{equation}
		\label{eqn:dissipationInequalityDelayPH}
		\ddt \hamiltonian\big(\state\big|_{[t-\tau,t]}\big) \leq \outVar(t)^\top \inpVar(t),
	\end{equation}
	along any solution of~\eqref{eqn:pHDelay}. In particular, the \pH delay system~\eqref{eqn:pHDelay} is passive.
\end{lemma}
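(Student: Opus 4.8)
The plan is to differentiate the Hamiltonian \eqref{eqn:pHDelay:Hamiltonian} along a solution of \eqref{eqn:pHDelay} and to show that $\ddt\hamiltonian(\state\big|_{[t-\tau,t]}) - \outVar(t)^\top\inpVar(t)$ equals the negative of a quadratic form in $(\state(t),\state(t-\tau))$ whose matrix is exactly the block matrix in \eqref{eqn:pHcondition}; nonnegativity of that matrix then yields \eqref{eqn:dissipationInequalityDelayPH}. First I would split the derivative into the contribution of the quadratic term $\tfrac12\state(t)^\top H\state(t)$ and that of the integral $\int_{t-\tau}^t\state(s)^\top\delayHam\state(s)\ds$.

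For the quadratic term, using $H=H^\top$ gives $\ddt\bigl(\tfrac12\state^\top H\state\bigr)=\state(t)^\top H\dot\state(t)$, and substituting the state equation $H\dot\state=(J-R)\state-Z\state(t-\tau)+G\inpVar$ produces $\state^\top(J-R)\state-\state^\top Z\state(t-\tau)+\state^\top G\inpVar$. The skew-symmetry $J=-J^\top$ makes $\state^\top J\state=0$, while the output relation $\outVar=G^\top\state$ turns $\state^\top G\inpVar$ into the supply rate $\outVar^\top\inpVar$, leaving $-\state^\top R\state-\state^\top Z\state(t-\tau)+\outVar^\top\inpVar$. For the integral term, the fundamental theorem of calculus contributes the boundary term $\state(t)^\top\delayHam\state(t)-\state(t-\tau)^\top\delayHam\state(t-\tau)$.

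Collecting the pieces and subtracting $\outVar^\top\inpVar$ leaves $-\state^\top(R-\delayHam)\state-\state^\top Z\state(t-\tau)-\state(t-\tau)^\top\delayHam\state(t-\tau)$. Writing the scalar cross term symmetrically as $\state^\top Z\state(t-\tau)=\tfrac12\state^\top Z\state(t-\tau)+\tfrac12\state(t-\tau)^\top Z^\top\state$, I would recognize this expression as minus the quadratic form associated with the matrix in \eqref{eqn:pHcondition} evaluated at $(\state(t)^\top,\state(t-\tau)^\top)^\top$. Since that matrix lies in $\spsd{2n}$ by assumption, the quadratic form is nonnegative, so $\ddt\hamiltonian-\outVar^\top\inpVar\le0$, which is \eqref{eqn:dissipationInequalityDelayPH}. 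Integrating this pointwise inequality over an arbitrary interval $[t_0,t_1]$ then gives the integral dissipation inequality of \Cref{def:PassPH}, establishing passivity.

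I do not expect a serious obstacle: this is the delay analogue of the standard argument that a \pH system is passive. The one point demanding care is the symmetrization of $\state^\top Z\state(t-\tau)$, since getting the factors of $\tfrac12$ right is precisely what matches the off-diagonal blocks $\tfrac12 Z$ and $\tfrac12 Z^\top$ of \eqref{eqn:pHcondition} and makes that condition sharp; note also that the absence of a factor $\tfrac12$ on the integral in \eqref{eqn:pHDelay:Hamiltonian} is exactly what lets the boundary term $\state^\top\delayHam\state$ combine with $-\state^\top R\state$ into $-\state^\top(R-\delayHam)\state$. Implicitly the solution must be regular enough (absolutely continuous, as in $\dom(\calA)$) for the differentiation under the integral and the use of $\dot\state$ to be justified.
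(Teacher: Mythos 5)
Your proposal is correct and follows essentially the same argument as the paper: differentiate the Hamiltonian along a solution, substitute the state equation, use $J=-J^\top$ and $\outVar=G^\top\state$, and recognize the remaining terms as the negative of the quadratic form associated with the matrix in~\eqref{eqn:pHcondition}, whose positive semi-definiteness yields~\eqref{eqn:dissipationInequalityDelayPH}; integrating gives passivity. The paper's proof is just a more compressed version of this same computation.
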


\begin{proof}
	We obtain
	\begin{align*}
		&\ddt \hamiltonian\big(\state\big|_{[t-\tau,t]}\big)\\
		&= \state(t)^\top H\dot{\state}(t) + \state(t)^\top \delayHam \state(t) - \state(t-\tau)^\top \delayHam\state(t-\tau)\\
		&= -\begin{bmatrix}
			\state(t)\\
			\state(t-\tau)
		\end{bmatrix}^\top\begin{bmatrix}
			R - \delayHam &  \tfrac{1}{2}Z\\
			\tfrac{1}{2}Z^\top & \delayHam
		\end{bmatrix}\begin{bmatrix}
			\state(t)\\
			\state(t-\tau)
		\end{bmatrix} + \state(t)^\top G\inpVar(t)\\
		&\leq \outVar(t)^\top \inpVar(t).
	\end{align*}
	Integration of~\eqref{eqn:dissipationInequalityDelayPH} yields the dissipation inequality and thus passivity of the delay \pH system.
\end{proof}

We now discuss in which sense \Cref{def:pHDelay} is a generalization of the \pH system definition in \Cref{def:PassPH}. We can consider two special cases of the \pH delay system~\eqref{eqn:pHDelay} to retain classical \pH systems. First, let $Z=0$. To recover the classical Hamiltonian, we set $\delayHam=0$ and then observe that~\eqref{eqn:pHcondition} reduces to $R\in \spsd{n}$, i.e., to the classical condition for \pH systems. On the other hand, if $\tau = 0$, then we can again set $\delayHam = 0$ and notice that~\eqref{eqn:pHcondition} is only satisfied for $Z = 0$. On the other hand, if we require
\begin{displaymath}
	\begin{bmatrix}
		\state\\\state
	\end{bmatrix}^\top \begin{bmatrix}
		R & \tfrac{1}{2}Z\\
		\tfrac{1}{2}Z^\top & 0
	\end{bmatrix}\begin{bmatrix}
		\state\\\state
	\end{bmatrix}\geq 0
\end{displaymath}
for any $\state\in\R^{\stateDim}$, which corresponds to the specific choice used in the proof of \Cref{lem:dissipationInequality}, then we recover the condition $R + \Sym(Z) \in \spsd{\stateDim}$, corresponding to the system
\begin{align*}
	H\dot{\state}(t) &= (J-\Skew(Z) - (R+\Sym(Z))\state(t) + G\inpVar(t)\\
	\outVar(t) &= G^\top \state(t).
\end{align*}

In terms of modelling, an important feature of \pH systems is that they are closed under power-conserving or dissipative interconnection. Consider two \pH delay systems of the form 
\begin{align*}
	H_i\dot{\state}_i(t) &= (J_i-R_i)\state_i(t) - Z_i\state_i(t-\tau)  + G_i\inpVar_i(t),\\
	\outVar_i(t) &= G^\top_i \state_i(t)
\end{align*}
with $H_i\in\spd{\stateDim_i}$, $R_i,\delayHam_i\in\spsd{\stateDim_i}$, $-J_i^\top = J_i\in\R^{\stateDim_i\times\stateDim_i}$ and Hamiltonians
\begin{equation*}
	\hamiltonian_i(\state_i\big|_{[t-\tau,t]}) = \tfrac{1}{2} \state_i(t)^\top H_i \state_i(t) + \int_{t-\tau}^t \state_i(s)^\top \delayHam_i \state_i(s)\ds
\end{equation*}
for $i= 1,2$. Define the aggregated input and output vectors as $\tilde{\inpVar} \vcentcolon=\begin{bmatrix}
	\inpVar_1^\top & \inpVar_2^\top
\end{bmatrix}^\top$ and $\tilde{\outVar} \vcentcolon= \begin{bmatrix}
\outVar_1^\top & \outVar_2^\top
\end{bmatrix}^\top$ and consider an output-feedback of the form
\begin{displaymath}
	\tilde{\inpVar} = F\tilde{\outVar} + w
\end{displaymath}
with $F \in \R^{(\inpVarDim_1 + \inpVarDim_2) \times (\inpVarDim_1 + \inpVarDim_2)}$. By defining $\tilde{\state} \vcentcolon= \begin{bmatrix}
\state_1^\top & \state_2^\top
\end{bmatrix}^\top$ and
\begin{align*}
	\tilde{H} &\vcentcolon= \diag(H_1, H_2), & 
	\tilde{J} &\vcentcolon= \diag(J_1, J_2), \\
	\tilde{R} &\vcentcolon= \diag(R_1, R_2), &
	\tilde{Z} &\vcentcolon= \diag(Z_1, Z_2), \\
	\tilde{G} &\vcentcolon= \diag(G_1, G_2), &
	\tilde{\delayHam} &\vcentcolon= \diag(\delayHam_1, \delayHam_2),
\end{align*}
we obtain the interconnected (closed-loop) system
\begin{equation}
	\label{eqn:intercpHf}
		\begin{aligned}
			\tilde{H}
				\dot{\tilde{\state}}(t)
			&= (\tilde{J} -
			\tilde{R} + \tilde{G} F \tilde{G}^\top)
			\tilde{\state}(t)
			+ \tilde{Z} \tilde{\state}(t-\tau) + \tilde{G} w(t)\\
		\tilde{\outVar}(t)
			 &= \tilde{G}^\top	\tilde{\state} (t)
		\end{aligned} 
\end{equation}
with combined Hamiltonian $\tilde{\hamiltonian} \vcentcolon= \hamiltonian_1 + \hamiltonian_2$ given by
\begin{align*}
	\tilde{\hamiltonian}\left(\tilde{\state}_i\big|_{[t-\tau,t]}\right) = \tfrac{1}{2} \tilde{\state}(t)^\top \tilde{H}\tilde{\state}(t) + \int_{t-\tau}^t \tilde{\state}(s)^\top \tilde{\delayHam}\tilde{\state}(s)\ds.
\end{align*}
We have thus proven the following result, which details that delay \pH systems are closed under interconnection.

\begin{lemma}[Interconnection]
	\label{lem:interconnection}
	Let us consider the interconnected system~\eqref{eqn:intercpHf} with Hamiltonian $\tilde{\hamiltonian} \vcentcolon= \hamiltonian_1 + \hamiltonian_2$. Then, the interconnected system~\eqref{eqn:intercpHf} is a \pH delay system if
	\begin{equation} 
		\label{eqn:pHcondition:interconnection}
		\begin{bmatrix}
			\tilde{R} - \tilde{G} \Sym (F) \tilde{G}^\top -\tilde{\delayHam} & \tfrac{1}{2} \tilde{Z}\\
			\tfrac{1}{2} \tilde{Z}^\top & \tilde{\delayHam}
		\end{bmatrix}\in\spsd{2(\stateDim_1+\stateDim_2)}.
	\end{equation}
	In particular, the interconnected system is a \pH delay system for every power-conserving ($\Sym(F) = 0$) and every dissipative ($-\Sym(F)\in\spsd{\stateDim_1+\stateDim_2}$) interconnection.
\end{lemma}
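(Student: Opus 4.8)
The plan is to bring the closed-loop system~\eqref{eqn:intercpHf} into the template of \Cref{def:pHDelay} by reading off candidate matrices and then verifying the four structural requirements ($H\in\spd{\stateDim}$, $\delayHam\in\spsd{\stateDim}$, $J=-J^\top$, and the semidefiniteness condition~\eqref{eqn:pHcondition}). Only the last of these is nontrivial; the rest is a matter of matching coefficients.

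First I would set $H\vcentcolon=\tilde H$, $G\vcentcolon=\tilde G$, and $\delayHam\vcentcolon=\tilde\delayHam$. Since $\tilde H=\diag(H_1,H_2)$ with $H_i\in\spd{\stateDim_i}$, we have $H\in\spd{\stateDim_1+\stateDim_2}$, and analogously $\tilde\delayHam\in\spsd{\stateDim_1+\stateDim_2}$; moreover the output $\tilde\outVar=\tilde G^\top\tilde\state$ and the combined Hamiltonian $\tilde\hamiltonian$ are already in the form prescribed by~\eqref{eqn:pHDelay:Hamiltonian}. The essential observation is that the state coefficient $\tilde J-\tilde R+\tilde G F\tilde G^\top$ in~\eqref{eqn:intercpHf} must be split into its symmetric and skew-symmetric parts. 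Writing $F=\Sym(F)+\Skew(F)$ gives $\tilde G F\tilde G^\top=\tilde G\Sym(F)\tilde G^\top+\tilde G\Skew(F)\tilde G^\top$, where the first summand is symmetric and the second skew-symmetric. Hence I would define
\begin{equation*}
 J\vcentcolon=\tilde J+\tilde G\Skew(F)\tilde G^\top,\qquad R\vcentcolon=\tilde R-\tilde G\Sym(F)\tilde G^\top,
\end{equation*}
so that $J=-J^\top$ and $R=R^\top$ with $\tilde J-\tilde R+\tilde G F\tilde G^\top=J-R$. Matching the delay term $+\tilde Z\tilde\state(t-\tau)$ against the template $-Z\state(t-\tau)$ fixes $Z\vcentcolon=-\tilde Z$.

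It then remains to check~\eqref{eqn:pHcondition} for these matrices, i.e.\ that
\begin{equation*}
 \begin{bmatrix} \tilde R-\tilde G\Sym(F)\tilde G^\top-\tilde\delayHam & -\tfrac12\tilde Z\\ -\tfrac12\tilde Z^\top & \tilde\delayHam \end{bmatrix}\in\spsd{2(\stateDim_1+\stateDim_2)}.
\end{equation*}
The sign of the off-diagonal blocks looks different from the hypothesis~\eqref{eqn:pHcondition:interconnection}, but this is immaterial: conjugating by the signature matrix $\diag(I,-I)$ is a congruence, hence by Sylvester's law of inertia this matrix is positive semidefinite if and only if the matrix in~\eqref{eqn:pHcondition:interconnection} is. Thus the hypothesis is exactly the remaining condition of \Cref{def:pHDelay}, and the interconnected system is a \pH delay system.

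For the two special cases I would exploit the block structure of the aggregated matrices. Up to a permutation (again a congruence), the matrix in~\eqref{eqn:pHcondition:interconnection} with $\Sym(F)=0$ is the direct sum of the two blocks $\begin{smallbmatrix} R_i-\delayHam_i & \tfrac12 Z_i\\ \tfrac12 Z_i^\top & \delayHam_i\end{smallbmatrix}$, each of which is positive semidefinite because systems $1$ and $2$ are themselves \pH delay systems; hence every power-conserving interconnection is \pH. For a dissipative interconnection, $-\Sym(F)\in\spsd{\stateDim_1+\stateDim_2}$ yields $\tilde G(-\Sym(F))\tilde G^\top\in\spsd{\stateDim_1+\stateDim_2}$, so the dissipative matrix equals the (already semidefinite) power-conserving matrix plus $\diag(\tilde G(-\Sym(F))\tilde G^\top,0)$, and a sum of positive semidefinite matrices is positive semidefinite. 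I expect the main obstacle to be purely bookkeeping: keeping the skew/symmetric split, the sign of $Z$, and the two congruences consistent so that the finite-dimensional condition lines up exactly with~\eqref{eqn:pHcondition:interconnection}.
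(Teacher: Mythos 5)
Your proof is correct and follows essentially the same route as the paper: the lemma there is ``proven'' by the construction preceding it, namely absorbing the symmetric part $\tilde G\Sym(F)\tilde G^\top$ into $R$ and the skew part $\tilde G\Skew(F)\tilde G^\top$ into $J$, so that condition~\eqref{eqn:pHcondition:interconnection} is exactly condition~\eqref{eqn:pHcondition} of \Cref{def:pHDelay} for the closed-loop system, with the block-diagonal structure giving the two special cases. Your extra care with the sign of the delay term (via the $\diag(I,-I)$ congruence) and the explicit permutation argument for the power-conserving/dissipative cases are details the paper leaves implicit, but they do not constitute a different approach.
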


\subsection{Construction of the Lyapunov-Krasovkii functional}
\label{subsec:timeDelayPH:implications}

While the energy (and thus the Hamiltonian) for non-delayed systems is typically obtained during the modeling process, this may not be the case for the Hamiltonian, i.e., the Lyapunov-Krasovskii functional~\eqref{eqn:LyapunovKrasovskiiFunctional}, of the delay system~\eqref{eqn:delaySys}. In particular, if the delay system results from delayed feedback of a non-delayed \pH system (see the forthcoming \cref{subsec:delayedFeedback}), then the matrix $\delayHam\in\spsd{\stateDim}$ in~\eqref{eqn:pHDelay:Hamiltonian} is not available. We thus discuss in the following conditions when such a matrix $\delayHam$ satisfying the delay \pH condition~\eqref{eqn:pHcondition} exists. We first start with a geometric analysis.

\begin{proposition}
	\label{lem:pHdelayCondition:RZrelation}
	A necessary condition for a system of the form~\eqref{eqn:pHDelay} with $\tau>0$ to be a \pH delay system is
	\begin{subequations}
		\label{eqn:pHcondition:necessaryConditions}
		\begin{align}
		\label{eqn:pHcondition:necessaryConditions:a}&\ker(R) \subseteq \ker(\delayHam) \subseteq \ker(Z),\\ 
		\label{eqn:pHcondition:necessaryConditions:b}&\ker(R) \cap \image(Z) = \{0\}, \text{ and}\\
		\label{eqn:pHcondition:necessaryConditions:c}&\ker(R) \cap \image(\delayHam) = \{0\}	.	
	\end{align}
	\end{subequations}
\end{proposition}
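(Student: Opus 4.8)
The plan is to read off everything from the single matrix inequality~\eqref{eqn:pHcondition}; the remaining defining data ($H\in\spd{\stateDim}$, $J=-J^\top$) are irrelevant to the claimed kernel and range relations, and the hypothesis $\tau>0$ enters only to exclude the degenerate non-delay case, not algebraically. Writing $M\vcentcolon=\begin{smallbmatrix}R-\delayHam & \tfrac12 Z\\ \tfrac12 Z^\top & \delayHam\end{smallbmatrix}$, I would first note that $M\in\spsd{2\stateDim}$ forces $M=M^\top$, so that (as $\delayHam=\delayHam^\top$) also $R=R^\top$, and that the two diagonal blocks $R-\delayHam$ and $\delayHam$ are themselves positive semidefinite; consequently $R=(R-\delayHam)+\delayHam\in\spsd{\stateDim}$ as well.

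Next I would isolate two elementary linear-algebra facts. The \emph{block fact}: for any $\begin{smallbmatrix}A&B\\B^\top&C\end{smallbmatrix}\in\spsd{2\stateDim}$ and any $x\in\ker A$, evaluating the nonnegative quadratic form at $(tx,y)$ gives $2t\,x^\top B y + y^\top C y\ge 0$ for all $t\in\R$ (the $t^2$-term drops since $x^\top A x=0$), which forces $x^\top B y=0$ for every $y$, hence $\ker A\subseteq\ker B^\top$; symmetrically $\ker C\subseteq\ker B$. Applied to $M$ this produces $\ker(R-\delayHam)\subseteq\ker(Z^\top)$ and $\ker(\delayHam)\subseteq\ker(Z)$. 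The \emph{sum fact}: for $P,S\in\spsd{\stateDim}$ one has $\ker(P+S)=\ker P\cap\ker S$, since $v^\top(P+S)v=0$ forces both nonnegative summands to vanish. Taking $P=R-\delayHam$ and $S=\delayHam$ is the linchpin of the argument: it yields at once $\ker(R)\subseteq\ker(\delayHam)$ and $\ker(R)\subseteq\ker(R-\delayHam)$.

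It then remains only to chain these containments. For~\eqref{eqn:pHcondition:necessaryConditions:a} I combine $\ker(R)\subseteq\ker(\delayHam)$ with $\ker(\delayHam)\subseteq\ker(Z)$. For~\eqref{eqn:pHcondition:necessaryConditions:c} I use $\ker(R)\subseteq\ker(\delayHam)=\image(\delayHam)^\perp$ (the equality by symmetry of $\delayHam$), so that $\ker(R)\cap\image(\delayHam)\subseteq\image(\delayHam)^\perp\cap\image(\delayHam)=\{0\}$. For~\eqref{eqn:pHcondition:necessaryConditions:b} I instead combine $\ker(R)\subseteq\ker(R-\delayHam)$ with $\ker(R-\delayHam)\subseteq\ker(Z^\top)=\image(Z)^\perp$ to obtain $\ker(R)\subseteq\image(Z)^\perp$, whence $\ker(R)\cap\image(Z)=\{0\}$.

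I do not anticipate a genuine obstacle: once $M\in\spsd{2\stateDim}$ is in hand, every step is standard. The only mildly subtle point is~\eqref{eqn:pHcondition:necessaryConditions:b}, which cannot be obtained from the immediate containment $\ker(\delayHam)\subseteq\ker(Z)$ but requires the \emph{other} side of the block fact, $\ker(R-\delayHam)\subseteq\ker(Z^\top)$; this is exactly why the decomposition $R=(R-\delayHam)+\delayHam$, and hence $\ker(R)\subseteq\ker(R-\delayHam)$, is needed. I would also keep the symmetry bookkeeping clean, making sure $\spsd{\cdot}$ is read as \emph{symmetric} positive semidefinite so that the identities $\ker(\delayHam)=\image(\delayHam)^\perp$ and $\ker(Z^\top)=\image(Z)^\perp$ are legitimately available; no estimate or nontrivial construction is required.
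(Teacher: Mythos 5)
Your argument is correct, and every step checks out: positive semidefiniteness of the block matrix in~\eqref{eqn:pHcondition} gives $R-\delayHam,\,\delayHam\in\spsd{\stateDim}$ (and symmetry of $R$); your \emph{block fact} (the affine-in-$t$ quadratic-form argument showing a kernel vector of a diagonal block annihilates the adjacent off-diagonal block) and your \emph{sum fact} ($\ker(P+S)=\ker P\cap\ker S$ for $P,S\in\spsd{\stateDim}$) are both valid; and the chaining is exactly right, including the subtle point that~\eqref{eqn:pHcondition:necessaryConditions:b} requires $\ker(R)\subseteq\ker(R-\delayHam)\subseteq\ker(Z^\top)=\image(Z)^\perp$ rather than the more obvious containment $\ker(\delayHam)\subseteq\ker(Z)$. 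Your route is, however, genuinely different from the paper's. The paper works in coordinates: it first applies an orthogonal congruence $V$ adapted to $\ker(R)$ and uses the fact that a vanishing diagonal block of a positive semidefinite matrix kills its entire block row and column to conclude $\delayHam_2=\delayHam_4=0$ and $Z_2=Z_3=Z_4=0$, which yields the first containment in~\eqref{eqn:pHcondition:necessaryConditions:a} together with~\eqref{eqn:pHcondition:necessaryConditions:b} and~\eqref{eqn:pHcondition:necessaryConditions:c}; it then repeats the same maneuver with a second transformation adapted to $\ker(\delayHam_1)$ to obtain $\ker(\delayHam)\subseteq\ker(Z)$. You replace the first transformation by the decomposition $R=(R-\delayHam)+\delayHam$ plus the sum fact, and both zero-block readings by a single basis-free application of the block fact to the untransformed matrix. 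What you gain is brevity and freedom from rank/partition bookkeeping (the paper's second step even asserts a simultaneous block-diagonalization of $R_1$ and $\delayHam_1$ by one orthogonal matrix, which is neither available in general nor actually needed); what the paper's coordinate approach buys is the condensed form~\eqref{eqn:trafoschurR}, which is not a mere byproduct of the proof but is reused immediately afterwards to parametrize $\delayHam_1$ and to establish \Cref{thm:LyapKravoskiiConstruction}.
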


\begin{proof}
	Let $r \vcentcolon= \rank(R)$ and $s\vcentcolon=\rank(\delayHam)$, define $\tilde{Z} \vcentcolon= \tfrac{1}{2}Z$ and assume that the matrices $R,Z$, and $\delayHam$ satisfy~\eqref{eqn:pHcondition}. Since $R$ is symmetric, we infer the existence of an orthogonal matrix $V\in\R^{\stateDim\times\stateDim}$ such that
	\begin{gather*}
		V^\top R V =\vcentcolon \begin{bmatrix}
			R_1 & 0\\
			0 & 0
		\end{bmatrix}, \qquad
		V^\top \delayHam V =\vcentcolon \begin{bmatrix}
			\delayHam_1 & \delayHam_2\\
			\delayHam_2^\top & \delayHam_4
		\end{bmatrix}, \\
		V^\top \tilde{Z} V =\vcentcolon \begin{bmatrix}
			Z_1 & Z_2\\
			Z_3 & Z_4
		\end{bmatrix}
	\end{gather*}
	with $R_1 \in \spd{r}$, resulting in
	\begin{align*}
		0 &\leq \begin{bmatrix}
			R_1 - \delayHam_1 & - \delayHam_2 & Z_1 & Z_2\\
			- \delayHam_2^\top & -\delayHam_4 & Z_3 & Z_4\\
			Z_1^\top & Z_3^\top & \delayHam_1 & \delayHam_2\\
			Z_2^\top & Z_4^\top & \delayHam_2^\top & \delayHam_4 
		\end{bmatrix}.
	\end{align*}
	We conclude $\delayHam_4=0$ and $\delayHam_2=0$, implying $\Kern(R) \subseteq \Kern(\delayHam)$ and $\Kern(R) \cap \image(S) = \{0\}$, and consequently $Z_2 = 0$, $Z_3 = 0$, and $Z_4=0$, which implies $\Kern(R) \subseteq \Kern(Z)$ and~\eqref{eqn:pHcondition:necessaryConditions:b}. The inequality \eqref{eqn:pHcondition} is thus equivalent to
\begin{equation}\label{eqn:trafoschurR}
	\begin{bmatrix}
		R_1 - \delayHam_1 & Z_1 \\
		Z_1^\top & \delayHam_1 
	\end{bmatrix} \geq 0.
\end{equation}
Since $\rank(\delayHam) = \rank(\delayHam_1)$, it exists orthogonal $U \in \R^{s \times s} $ such that
\begin{gather*}
	U^\top R_1 U = \begin{bmatrix}
		R_{11} & 0\\
		0 & R_{14}
	\end{bmatrix}, \quad
	U^\top \delayHam_1 U = \begin{bmatrix}
		\delayHam_{11} & 0\\
		0 & 0
	\end{bmatrix}\\
	U^\top Z_1 U = \begin{bmatrix}
		Z_{11} & Z_{12}\\
		Z_{13} & Z_{14} 
	\end{bmatrix}
\end{gather*}
with diagonal matrices $R_{11}, \delayHam_{11} \in \GL{s}{\R}$ and $R_{14} \in \GL{s-r}{\R}$ invertible and diagonal. 
Applied to \eqref{eqn:trafoschurR} we get
\begin{equation*}
	\begin{bmatrix}
		R_{11}-\delayHam_{11} & 0 & Z_{11} & Z_{12}\\
		0 & R_{14} & Z_{13} & Z_{14}\\
		Z_{11}^\top & Z_{13}^\top & \delayHam_{11} & 0\\
		Z_{12}^\top & Z_{14}^\top & 0 & 0 
	\end{bmatrix} \geq 0
\end{equation*}
We conclude $Z_{12}=0$ and $Z_{14}=0$, which in turn implies $\Kern(\delayHam) \subseteq \Kern(Z)$. 
\end{proof}

Using the geometric analysis, we can continue to establish existence results using the condensed form~\eqref{eqn:trafoschurR} with $R_1\in\spd{r}$. Based on the necessary conditions from \Cref{lem:pHdelayCondition:RZrelation}, we can assume $\delayHam_1\in\spd{r}$. Since $R_1$ is symmetric positive definite, Silvester's law of inertia implies the existence of $V_1\in\GL{r}{\R}$ such that $V_1^\top R_1 V_1 = I_r$ and set $\widehat{Z}_1 \vcentcolon= V_1^\top Z_1 V$. To simplify our computations, we make the choice 
\begin{equation}
	\label{eqn:parametrizationS}
	\delayHam_1\colon \R\to\R^{r\times r},\qquad \alpha \mapsto \alpha V_1^{-\top}V_1^{-1}.
\end{equation}
We immediately obtain $R_1-\delayHam_1(\alpha) = (1-\alpha)V_1^{-\top}V_1^{-1}\in\spd{r}$ and $\delayHam_1(\alpha)\in\spd{r}$ for $\alpha\in(0,1)$. Assuming $\alpha\in(0,1)$, the Schur complement $R_1 - \delayHam_1(\alpha) - Z_1^\top \delayHam_1^{-1}(\alpha)Z_1$ is symmetric positive semi-definite if and only if
\begin{equation}
	\label{eqn:SchurComplementConditionS}
	 (\alpha-\alpha^2) I_r - \widehat{Z}^\top \widehat{Z}\in\spsd{r}.
\end{equation}
Let $\sigma_{\max}$ denote the largest singular value of $\widehat{Z}$. Then, it is easily seen that the Schur complement condition~\eqref{eqn:SchurComplementConditionS} is satisfied if and only if $\sigma_{\max} \leq \tfrac{1}{4}$. In this case, the choice $\delayHam_1(\tfrac{1}{2})$ guarantees that the condition~\eqref{eqn:pHcondition} is satisfied. We summarize our discussion in the following theorem.

\begin{theorem}
	\label{thm:LyapKravoskiiConstruction}
	Consider a delay equation of the form~\eqref{eqn:pHDelay} and assume $\ker(R)\subseteq \ker(Z)$ and $\ker(R)\cap \image(Z) = \{0\}$. Let $r \vcentcolon= \rank(R)$ and $V_1\in\R^{\stateDim\times r}$ such that $V_1^\top R V_1 = I_r$. If $\|V_1^\top Z V_1\|_2 \leq 1$, then for $\delayHam\vcentcolon= \tfrac{1}{2}R\in\spsd{\stateDim}$ the condition~\eqref{eqn:pHcondition} is satisfied. 
\end{theorem}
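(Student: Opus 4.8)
The plan is to reduce the general statement of \Cref{thm:LyapKravoskiiConstruction} to the singular-value condition derived in the paragraph preceding it, by first passing to a coordinate system in which $R$ is block-diagonalized with an invertible block. Concretely, I would invoke the necessary conditions from \Cref{lem:pHdelayCondition:RZrelation}---namely $\ker(R)\subseteq\ker(Z)$ and $\ker(R)\cap\image(Z)=\{0\}$, which are exactly the hypotheses assumed---to argue that in an orthogonal basis adapted to $\ker(R)$, the matrix $Z$ (after the factor $\tfrac12$) has support entirely in the block where $R$ is positive definite. This is the geometric content already extracted in the proof of the proposition: the off-diagonal and kernel blocks $Z_2,Z_3,Z_4$ vanish, so that verifying~\eqref{eqn:pHcondition} collapses to verifying the reduced $2\times2$ block inequality~\eqref{eqn:trafoschurR} on the range of $R$.

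\textbf{Main argument via Schur complement.} With $\delayHam\vcentcolon=\tfrac12 R$, I would take $r\vcentcolon=\rank(R)$ and the matrix $V_1\in\R^{\stateDim\times r}$ with $V_1^\top R V_1 = I_r$ supplied by Sylvester's law of inertia. The choice $\delayHam=\tfrac12 R$ corresponds precisely to $\alpha=\tfrac12$ in the parametrization~\eqref{eqn:parametrizationS}, for which $R_1-\delayHam_1(\tfrac12)=\tfrac12 V_1^{-\top}V_1^{-1}\in\spd{r}$ and $\delayHam_1(\tfrac12)\in\spd{r}$ are both positive definite. Positive semidefiniteness of the reduced block is then equivalent, by the Schur complement with respect to the invertible $(2,2)$-entry $\delayHam_1$, to the condition~\eqref{eqn:SchurComplementConditionS}, i.e.\ $(\alpha-\alpha^2)I_r-\widehat Z^\top\widehat Z\succeq 0$ with $\widehat Z\vcentcolon= V_1^\top Z V_1$ (up to the factor $\tfrac12$ absorbed into $\tilde Z$). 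At $\alpha=\tfrac12$ this reads $\tfrac14 I_r\succeq \tfrac14\widehat Z^\top\widehat Z$, which is exactly the hypothesis $\sigma_{\max}(\widehat Z)\le 1$, equivalently $\|V_1^\top Z V_1\|_2\le 1$.

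\textbf{The delicate step} will be bookkeeping the two factors of $\tfrac12$ consistently: the proof of \Cref{lem:pHdelayCondition:RZrelation} works with $\tilde Z=\tfrac12 Z$, whereas the theorem statement is phrased directly in terms of $Z$ and the bound $\|V_1^\top Z V_1\|_2\le 1$. I would verify that substituting $\tilde Z=\tfrac12 Z$ into $\widehat Z=V_1^\top\tilde Z V_1$ turns the general threshold $\sigma_{\max}\le\tfrac14$ of~\eqref{eqn:SchurComplementConditionS} at $\alpha=\tfrac12$ into the clean bound $\sigma_{\max}(V_1^\top Z V_1)\le 1$ stated in the theorem; this is where a misplaced constant would silently break the result. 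Finally, I would note that $\delayHam=\tfrac12 R\in\spsd{\stateDim}$ and that the hypotheses guarantee $\ker(\delayHam)=\ker(R)\subseteq\ker(Z)$, so all structural requirements of \Cref{def:pHDelay} are met and~\eqref{eqn:pHcondition} holds, completing the construction.
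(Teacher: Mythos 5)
Your proposal is correct and takes essentially the same route as the paper's own proof: reduce to the range of $R$ using the kernel hypotheses, normalize via $V_1$ so that $R$ becomes $I_r$, choose $\delayHam = \tfrac{1}{2}R$ (the $\alpha=\tfrac{1}{2}$ choice in \eqref{eqn:parametrizationS}), and verify \eqref{eqn:pHcondition} by a Schur complement, with the factor-of-$\tfrac{1}{2}$ bookkeeping correctly yielding the threshold $\|V_1^\top Z V_1\|_2\le 1$. The one caveat—shared equally by the paper's proof, which writes "similarly as in the proof of \Cref{lem:pHdelayCondition:RZrelation}"—is that the vanishing of the cross block $V_2^\top Z V_1$ is imported from the block structure of \Cref{lem:pHdelayCondition:RZrelation}, whose proof derives that structure \emph{from} positive semidefiniteness of \eqref{eqn:pHcondition}, not from the stated hypotheses $\ker(R)\subseteq\ker(Z)$ and $\ker(R)\cap\image(Z)=\{0\}$ alone, so both arguments rest on the same implicit step at that point.
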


\begin{proof}
	Let $V_2\in\R^{\stateDim\times(\stateDim-r)}$ be a basis of $\ker(R)$ and define $V \vcentcolon= \begin{bmatrix}
		V_1 & V_2
	\end{bmatrix}\in\GL{\stateDim}{\R}$.
	Set $\delayHam \vcentcolon= \tfrac{1}{2}R \in\spsd{\stateDim}$ and define $\widehat{Z} \vcentcolon= V_1^\top Z V_1$. We then obtain (similarly as in the proof of \Cref{lem:pHdelayCondition:RZrelation})
	\begin{align}
		\label{eqn:pHcondition:transformed}
		\begin{bmatrix}
			V^\top & 0\\
			0 & V^\top
		\end{bmatrix}\begin{bmatrix}
			R-\delayHam & \tfrac{1}{2}Z\\
			\tfrac{1}{2}Z^\top & \delayHam
		\end{bmatrix}\begin{bmatrix}
			V & 0\\
			0 & V
		\end{bmatrix}
		&= \tfrac{1}{2}\begin{bmatrix}
			I_r & 0 & \widehat{Z} & 0\\
			0 & 0 & 0 & 0\\
			\widehat{Z} & 0 & I_r & 0\\
			0 & 0 & 0 & 0
		\end{bmatrix}.
	\end{align}
	Consider an orthogonal $U\in\GL{r}{\R}$ that diagonalizes $\widehat{Z}^\top \widehat{Z}$, i.e., $U^\top \widehat{Z}^\top \widehat{Z} U = \diag(\eta_1,\ldots,\eta_r)$ with $\eta_i \geq \eta_{i+1} \geq 0$ for $i=1,\ldots,r-1$. By assumption, we have $\eta_1 \leq 1$. The result now follows from
	\begin{displaymath}
		I_r - \widehat{Z}^\top \widehat{Z} = U(I_r - \diag(\eta_1,\ldots,\eta_r))U^\top \in\spsd{r},
	\end{displaymath}
	which is two times the Schur complement of the non-zero submatrix on the right-hand side in~\eqref{eqn:pHcondition:transformed}.
\end{proof}

Unfortunately, the condition $\|V_1^\top Z V_1\|_2 \leq 1$ is only sufficient, but not necessary, as the following example suggests.

\begin{example}
	Let $n=2$ and consider the matrices
	\begin{displaymath}
		R = \begin{bmatrix}
			1 & 0\\
			0 & 1
		\end{bmatrix}\qquad\text{and}\qquad Z = \begin{bmatrix}
			0 & \tfrac{1}{\sqrt{3}}\\
			\tfrac{2}{\sqrt{3}} & 0
		\end{bmatrix}.
	\end{displaymath}
	With $V_1 = I_2$ we obtain $\|V_1^\top ZV_1\|_2 = \|Z\|_2 = \tfrac{2}{\sqrt{3}} > 1$. Nevertheless, it is easy to see that with the choice $\delayHam = \diag(\tfrac{1}{2},\tfrac{1}{4})$
	the condition~\eqref{eqn:pHcondition} is satisfied.
\end{example}

\subsection{Comparison with the literature}
Using the results from the previous subsection, we are now in a position to relate our definition of a delay \pH system with the passivity matrix inequality from \cite[Lem.~1]{NicL01}, presented in \Cref{lem:passiveTimeDelay}. We obtain the following result.

\begin{proposition}
	\label{prop:delayPHinequality}
	Consider a \pH delay system~\eqref{eqn:pHDelay} and assume $S\in\spd{\stateDim}$. Then, \eqref{eqn:pHDelay} fulfills the inequality~\eqref{eqn:NicL01inequality}. 
\end{proposition}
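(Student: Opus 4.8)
The plan is to rewrite the \pH delay system~\eqref{eqn:pHDelay} in the explicit form~\eqref{eqn:delaySys}, read off the coefficient matrices, and then exhibit the data $Q$ and the delay weight playing the role of $\delayHam$ in \Cref{lem:passiveTimeDelay} for which~\eqref{eqn:NicL01inequality} holds. Since $H\in\spd{\stateDim}$ is invertible, left-multiplying the state equation by $H^{-1}$ yields the identification
\[
A_0 = H^{-1}(J-R),\quad A_1 = -H^{-1}Z,\quad B = H^{-1}G,\quad C = G^\top.
\]
Comparing the \pH Hamiltonian~\eqref{eqn:pHDelay:Hamiltonian} with the Lyapunov--Krasovskii functional~\eqref{eqn:LyapunovKrasovskiiFunctional}, and accounting for the factor $\tfrac12$ multiplying the functional in~\eqref{eqn:NicL01passivityinequ}, the natural choice is $Q\vcentcolon= H$ for the state weight and $2\delayHam$ for the delay weight.

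First I would verify the output constraint: with $Q=H$ one has $B^\top Q = G^\top H^{-1}H = G^\top = C$, so $C=B^\top Q$ is automatic and uses nothing beyond symmetry of $H$. Next I would substitute the identified matrices into the left-hand side of~\eqref{eqn:NicL01inequality} with delay weight $2\delayHam$. Using $J^\top=-J$ and $R^\top=R$, the symmetric part collapses as $A_0^\top Q + QA_0 = (J-R)^\top + (J-R) = -2R$, while $QA_1(2\delayHam)^{-1}A_1^\top Q = \tfrac12 Z\delayHam^{-1}Z^\top$; hence the entire expression reduces to $-2R + \tfrac12 Z\delayHam^{-1}Z^\top + 2\delayHam$.

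The crux is then to recognize this as (minus twice) a Schur complement of the \pH condition~\eqref{eqn:pHcondition}. Because the hypothesis $\delayHam=S\in\spd{\stateDim}$ renders the lower-right block invertible, \eqref{eqn:pHcondition} is equivalent to positive semidefiniteness of the Schur complement $R - \delayHam - \tfrac14 Z\delayHam^{-1}Z^\top$. Multiplying this relation by $2$ gives $-2R + \tfrac12 Z\delayHam^{-1}Z^\top + 2\delayHam \leq 0$, which is exactly the expression computed above, establishing~\eqref{eqn:NicL01inequality}. I expect the only genuine subtlety to be the bookkeeping of scaling factors: pinning down that the delay weight entering~\eqref{eqn:NicL01inequality} must be $2\delayHam$ rather than $\delayHam$, a choice forced simultaneously by the output constraint (which demands $Q=H$) and by the factor $\tfrac12$ appearing in~\eqref{eqn:NicL01passivityinequ}. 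Everything else is a direct computation together with a single application of the Schur complement criterion, which is precisely where positive definiteness of $\delayHam$ is indispensable.
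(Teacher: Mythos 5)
Your proof is correct and takes essentially the same route as the paper: rewrite the \pH delay system as~\eqref{eqn:delaySys} via $A_0=H^{-1}(J-R)$, $A_1=-H^{-1}Z$, choose $Q$ proportional to $H$ so the skew part $J$ cancels, and identify the resulting expression as a negative multiple of the Schur complement of~\eqref{eqn:pHcondition}, which is where $\delayHam\in\spd{\stateDim}$ is needed. The only difference is normalization---the paper takes $Q=\tfrac{1}{2}H$ with delay weight $\delayHam$, you take $Q=H$ with weight $2\delayHam$, and the two are equivalent since~\eqref{eqn:NicL01inequality} is jointly homogeneous in $(Q,\delayHam)$; your scaling even has the side benefit of satisfying the output condition $C=B^\top Q$ as well, which the paper's choice does not.
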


\begin{proof}
	A \pH delay system of the form \eqref{eqn:pHDelay} corresponds to the delay equation~\eqref{eqn:delaySys} with
	\begin{align*}
		A_0 &= H^{-1}(J-R), &
		A_1 &= - H^{-1}Z, &
		G &= H^{-1}B. 
	\end{align*}
	With the choice $Q = \tfrac{1}{2}H\in\spd{\stateDim}$ we obtain
	\begin{multline*}
		A_0^\top Q + Q A_0 + Q A_1 \delayHam^{-1} A_1^\top Q +S\\
		= -R + \delayHam + \tfrac{1}{4} Z \delayHam^{-1}Z^\top,
	\end{multline*}
	which is the negative of the Schur complement of~\eqref{eqn:pHcondition}, and hence negative semi-definite.
\end{proof}

Two remarks are in order:
\begin{itemize}
	\item On the one hand, the \pH condition~\eqref{eqn:pHcondition} is slightly more general, since it does not require $\delayHam$ to be nonsingular. Moreover, the explicit choice of $H$ (and thus $Q$), makes the matrix inequality~\eqref{eqn:NicL01passivityinequ} easier to verify; see the discussion in \cref{subsec:timeDelayPH:implications}.
	\item On the other hand, the explicit choice of $H$ has an impact on $\ker(R)$ and thus on the possible choices of~$\delayHam$; cf.~\Cref{lem:pHdelayCondition:RZrelation}. In particular, the dimension of $\ker(R)$ is not independent of~$H$, but $\dim(\ker(Z))$ and $\dim(\image(Z))$ are independent of~$H$, which renders~\eqref{eqn:pHcondition} less flexible then~\eqref{eqn:NicL01passivityinequ}.
\end{itemize}

\subsection{Delayed feedback control}
\label{subsec:delayedFeedback}

As important special case, we consider the standard \pH system~\eqref{eqn:pH} and assume a delayed feedback of the form
\begin{equation}
	\label{eqn:delayedOutputFeedback}
	\inpVar(t) \vcentcolon= -F\outVar(t-\tau) + v(t)
\end{equation}
with $F\in\R^{\stateDim\times\stateDim}$, then results in the time-delay system
\begin{equation}
	\label{eqn:pH:timeDelayedFeedback}
	\begin{aligned}
		H\dot{\state}(t) &= (J-R)\state(t) - G F G^\top\state(t-\tau) + v(t),\\
		\outVar(t) &= G^\top \state(t),
	\end{aligned}
\end{equation}
which is (formally) of the form~\eqref{eqn:pHDelay} with $Z = G F G^\top$. We now discuss necessary and sufficient conditions on $G$ and $F$ such that~\eqref{eqn:pH:timeDelayedFeedback} is indeed a \pH delay system. We notice that the necessary condition from \Cref{lem:pHdelayCondition:RZrelation} is satisfied if
\begin{equation}
  \begin{aligned}
    \label{eqn:RZrelationsWithFeedback}
	\ker(R) \subseteq \ker(G^\top) &= \{0\},\\
	\ker(R) \cap \image(G) &= \{0\}.
  \end{aligned}
\end{equation}
Let us mention, that the second condition also shows up in the characterization of optimal controls for \pH systems, see \cite[Thm.~8]{SchPFWM21}. Assuming that \eqref{eqn:RZrelationsWithFeedback} holds and using \Cref{thm:LyapKravoskiiConstruction} we thus obtain an upper bound on the norm of the feedback gain~$F$ such that we can guarantee that the closed-loop system is still a delay \pH system.

\section{Conclusions}

We have presented a novel definition of a time-delay \pH system, which is based on a detailed investigation of the corresponding infinite-dimensional \KYP inequality. We then showed that our class of delay \pH systems satisfies important properties of \pH systems, such as delay-independent passivity and invariance under interconnection. Moreover, the structured \pH form allows for a simplified construction of Lyapunov-Krasovskii functional compared to standard results in the literature. Based on this first step, we envision multiple generalizations, including neutral and nonlinear delay systems and an extension to delay differential-algebraic equations.


\bibliographystyle{plain-doi}
\bibliography{literature}

\end{document}